\documentclass{article}

\usepackage[utf8]{inputenc}
\usepackage{amsmath,amssymb,dsfont,amsfonts}
\usepackage{amsthm}
\usepackage{mathrsfs}
\usepackage{graphics}
\usepackage{graphicx}
\usepackage{color}

\newtheorem{example}{Example}

\newtheorem{theorem}{Theorem}[section]
\newtheorem{proposition}[theorem]{Proposition}
\newtheorem{corollary}[theorem]{Corollary}
\newtheorem{remark}[theorem]{Remark}
\newtheorem{definition}[theorem]{Definition}
\newtheorem{lemma}[theorem]{Lemma}

\usepackage[bottom=2cm,top=2.5cm,left=2.5cm,right=2.5cm]{geometry}

\title{Goldbach and Alexandroff  Shake Hands}

\author{
Lourival Lima\footnote{Mathematics Department, Escuela Superior
Politécnica del Litoral, ESPOL. Vía Perimetral 5, Guayaquil,
Ecuador. E-mail: lourodri@espol.edu.ec.
Research supported by ESPOL.}
\ \ and\ \
Jorge Vielma\footnote{Mathematics Department, Escuela Superior
Politécnica del Litoral, ESPOL. Vía Perimetral 5, Guayaquil,
Ecuador. E-mail: jevielma@espol.edu.ec.
Research supported by ESPOL.}
}

\date{}

\begin{document}

\maketitle

\begin{abstract}
We introduce a relational Alexandroff topology associated with the strong
Goldbach conjecture on the set of even integers greater than or equal to
six. The construction is induced by a multivalued map encoding Goldbach
representations and leads naturally to a notion of relational reachability.
For arbitrary binary relations, we show that singleton closures coincide
with reachable sets and that the common intersection of all singleton
closures consists exactly of the globally reachable points. In the Goldbach
setting, this yields a topological characterization of the conjecture in
terms of the distinguished point \(6\). In particular, the strong Goldbach
conjecture is equivalent to the associated relational space being non-\(wR_0\)
and connected. We also obtain compactness and supercompactness criteria in
terms of the set of possible Goldbach exceptions.
\end{abstract}

\medskip
\noindent\textbf{Keywords:} Goldbach conjecture; Alexandroff topology; relational topology; reachability; weak separation axioms; connectedness; compactness; supercompactness. \\
\medskip
\noindent\textbf{2020 Mathematics Subject Classification:} Primary 54A05; Secondary 11P32, 54D05, 54D30.

\vspace{1cm}

\section{Introduction}

The strong Goldbach conjecture is one of the most classical open problems
in number theory. It asserts that every even integer greater than two can
be represented as the sum of two prime numbers. Despite its elementary
formulation and extensive computational verification, the conjecture has
resisted a complete proof. In particular, large-scale computations have
verified the conjecture for all even integers up to \(4\cdot 10^{18}\)
\cite{OliveiraHerzogPardi2014}, but no argument is known that covers all
even integers.

Topological methods have proved useful in the study of discrete
mathematical structures, especially when algebraic, combinatorial, or
dynamical information can be encoded through specialization preorders,
reachability relations, or Alexandroff topologies. Functional Alexandroff
spaces provide a natural framework for this type of construction
\cite{ShiraziGolestani2011,Pajoohesh2024}, while the interplay between
preorders and topologies has been studied from several perspectives,
including lattice-theoretic aspects \cite{Uzcategui2003}. Closely related
ideas also arise in the theory of primal spaces and their continuous maps
\cite{EchiTurki2019,MejiasVielmaAponteLima2025}.

Arithmetic dynamics provide a particularly suggestive setting for such
topological constructions. In the context of the Ulam--Kakutani--Collatz
conjecture, Vielma and Guale introduced a primal topology associated with
the Collatz map and obtained a characterization of the conjecture in terms
of the weak separation property \(wR_0\)
\cite{VielmaGuale2019}. More recently, the same topological structure has
been investigated from an algebraic point of view through the semiring of
open sets \cite{GualeVielma2026}. These works suggest that global arithmetic
behaviour may sometimes be captured by global topological properties of a
space naturally induced by the underlying discrete structure.

For Goldbach, the situation is structurally different. The conjecture is
not generated by the iteration of a single map, but by the existence of
prime decompositions. For an even integer \(m\geq 6\), define
\[
S(m)=\{p\in\mathbb P:m-p\in\mathbb P\}.
\]
Thus \(S(m)\) records all primes that occur in Goldbach representations of
\(m\). Rather than selecting a particular decomposition, we retain all
representations simultaneously and introduce the multivalued map
\[
F_G(m)=\{p+3:p\in S(m)\}.
\]
This gives rise to a binary relation \(\mathcal R_G\) on
\[
E_6=\{6,8,10,\ldots\},
\]
defined by
\[
m\mathcal R_G n
\quad\text{whenever}\quad
n\in F_G(m).
\]
The use of the prime \(3\) is not merely notational. If
\(p\in S(m)\), then \(m\mathcal R_G(p+3)\), while
the number \(p+3\) has \(3\) as a Goldbach prime, which yields a second
relational step to \(6\). Hence every
Goldbach representation generates a path of length at most two ending at
the distinguished point \(6\).

Given a binary relation \(R\) on a set \(X\), for each \(A\subseteq X\) we write
\[
R^{*}(A)=\{x\in X:\text{ there exists }y\in A\text{ such that }xRy\},
\]
and consider the topology
\[
\tau_R=
\{U\subseteq X:R^{*}(U)\subseteq U\}.
\]
This topology is Alexandroff, and its closed subsets are precisely the
subsets that are invariant under forward \(R\)-transitions. As a result,
the closure of a singleton admits a direct relational interpretation:
\[
\overline{\{x\}}^{\,\tau_R}
=
\{y\in X:x\leadsto_R y\},
\]
where \(x\leadsto_R y\) denotes reachability through a finite \(R\)-chain.
This observation motivates the study of the set
\[
K_R(X)=\bigcap_{x\in X}\overline{\{x\}},
\]
which consists exactly of those points that are reachable from every point
of the space. In the terminology of Di Maio \cite{DiMaio1985}, the
nonemptiness of this intersection is equivalent to the failure of the
\(wR_0\) property.

The main purpose of this paper is to show that this relational framework
provides a natural Alexandroff-topological formulation of the strong
Goldbach conjecture. For the Goldbach relation \(\mathcal R_G\), we prove
that the conjecture holds precisely when
\[
\bigcap_{m\in E_6}\overline{\{m\}}=\{6\}.
\]
Thus the conjecture is characterized by the failure of the \(wR_0\)
property in the associated Goldbach relational space. We also establish a
connectedness characterization and investigate compactness and
supercompactness. In particular, connectedness detects the full Goldbach
conjecture, whereas compactness is equivalent to the set of possible
Goldbach exceptions being finite.

Graph-theoretic formulations of Goldbach have also been considered in the
literature. In particular, Das, Ghosh, Ghosh and Sen introduced Goldbach
graphs and related the conjecture to graph connectedness
\cite{DasGhoshGhoshSen2021}. The present approach is different in nature:
the primary object is not an auxiliary graph but an Alexandroff topology
defined directly on the even integers, and the central objects are
singleton closures, relational reachability, separation properties and
compactness-type phenomena.

The paper is organized as follows. We first develop the general theory of
relational Alexandroff topologies and establish the correspondence between
closures and reachability. We then introduce the global relational core and
its relation with the \(wR_0\) property. After defining the Goldbach
relation and its associated topology, we prove the main topological
characterization of the strong Goldbach conjecture and finaly, we study the  structural properties of the Goldbach space, including
connectedness, compactness and supercompactness.

\section{Relational Alexandroff topologies}

This section develops the general topological framework that will be used
throughout the paper. Topologies induced by functions and discrete
dynamical structures have been studied in the setting of functional
Alexandroff spaces and primal spaces
\cite{Echi2012,ShiraziGolestani2011,Pajoohesh2024}. Here we work with an
arbitrary binary relation and associate with it an Alexandroff topology
whose open and closed sets reflect relational transitions. The main goal is
to establish a precise connection between topological closure and
relational reachability, which will later allow arithmetic information to
be translated into topological terms.

\begin{definition}
Let $X$ be a nonempty set and let
\[
R\subseteq X\times X
\]
be a binary relation. For $A\subseteq X$, define the relational predecessor operator
\[
R^{*}(A)
=
\{x\in X:\text{ there exists }y\in A\text{ such that }xRy\}.
\]
The family
\[
\tau_R
=
\left\{
U\subseteq X:R^{*}(U)\subseteq U
\right\}
\]
will be called the \emph{relational topology induced by $R$}.
\end{definition}

\bigskip

The preceding definition is meaningful only if the family \(\tau_R\) actually
satisfies the axioms of a topology. The next result shows that no additional
assumptions on the relation are required and, moreover, that the resulting
topology is automatically Alexandroff.

\begin{proposition}
\label{prop:relational-topology}
For every binary relation $R$ on $X$, the family $\tau_R$ is an
Alexandroff topology on $X$.
\end{proposition}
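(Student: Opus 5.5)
The plan is a direct check of the axioms, organized around one observation: $R^{*}$ distributes over arbitrary unions. That is, for every family $\{A_i\}_{i\in I}$ of subsets of $X$,
\[
R^{*}\Bigl(\bigcup_{i\in I}A_i\Bigr)=\bigcup_{i\in I}R^{*}(A_i).
\]
This holds because $x$ has an $R$-successor in $\bigcup_i A_i$ exactly when it has one in some $A_i$. In addition, $R^{*}$ is monotone: $A\subseteq B$ implies $R^{*}(A)\subseteq R^{*}(B)$. I will record these two facts first. Everything else is bookkeeping.

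\textbf{Steps.}
\begin{enumerate}
\item \emph{Trivial members.} $R^{*}(\emptyset)=\emptyset$, so $\emptyset\in\tau_R$. Also $R^{*}(X)\subseteq X$ trivially, so $X\in\tau_R$.
\item \emph{Unions.} If $U_i\in\tau_R$ for all $i$, then distributivity gives
\[
R^{*}\Bigl(\bigcup_i U_i\Bigr)=\bigcup_i R^{*}(U_i)\subseteq\bigcup_i U_i .
\]
\item \emph{Arbitrary intersections.} This is the step that makes $\tau_R$ Alexandroff, and it also covers finite intersections. Let $U_i\in\tau_R$ and $V=\bigcap_i U_i$. Since $V\subseteq U_j$ for each $j$, monotonicity gives $R^{*}(V)\subseteq R^{*}(U_j)\subseteq U_j$. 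Hence $R^{*}(V)\subseteq V$.
\end{enumerate}

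Thus $\tau_R$ is a topology closed under arbitrary intersections, which is the definition of an Alexandroff topology. If one instead uses the equivalent definition "each point has a smallest neighbourhood," I would add that the minimal open set containing $x$ is the intersection of all open sets containing $x$. By step 3 this intersection is open. It can also be described explicitly as the set of points from which $x$ is $R$-reachable, although the explicit description is not needed here.

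\textbf{Main obstacle.} None of the steps is hard. The only point needing care is that the notation $R^{*}$ is applied to sets, so distributivity and monotonicity must be stated and justified once, explicitly, before being used. There are no cases to split: the argument uses no property of $R$ at all (reflexivity, transitivity, and so on are all unnecessary), which matches the claim that no additional assumptions on the relation are required.
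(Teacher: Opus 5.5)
Your proposal is correct and follows essentially the same route as the paper: check $\varnothing$ and $X$, use that $R^{*}$ preserves arbitrary unions, and use monotonicity of $R^{*}$ for arbitrary intersections. The paper states this last step as $R^{*}\left(\bigcap_i U_i\right)\subseteq\bigcap_i R^{*}(U_i)$. Your side remark that the minimal open neighbourhood of $x$ is $\{z: z\leadsto_R x\}$ is also correct, though, as you note, it is not needed.
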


\begin{proof}
Clearly, $R^{*}(\varnothing)=\varnothing$, and hence
$\varnothing\in\tau_R$. Moreover, $R^{*}(X)\subseteq X$, so that $X\in\tau_R$. Let $\{U_i\}_{i\in I}$ be a family of elements of $\tau_R$. Since the relational predecessor operator preserves arbitrary unions,
\[
R^{*}\left(\bigcup_{i\in I}U_i\right)
=
\bigcup_{i\in I}R^{*}(U_i).
\]
Therefore,
\[
R^{*}\left(\bigcup_{i\in I}U_i\right)
\subseteq
\bigcup_{i\in I}U_i,
\]
and hence $\displaystyle \bigcup_{i\in I}U_i\in\tau_R$. For intersections, we have
\[
R^{*}\left(\bigcap_{i\in I}U_i\right)
\subseteq
\bigcap_{i\in I}R^{*}(U_i).
\]
Since each $U_i$ belongs to $\tau_R$, $R^{*}(U_i)\subseteq U_i$. Consequently,
\[
R^{*}\left(\bigcap_{i\in I}U_i\right)
\subseteq
\bigcap_{i\in I}U_i.
\]
Thus, $\bigcap_{i\in I}U_i\in\tau_R$.
\end{proof}

\begin{definition}
Let $R$ be a binary relation on $X$. We say that $y$ is
\emph{$R$-reachable from $x$}, and write $x\leadsto_R y$, if either $x=y$, or there exist $n\geq1$ and points $x=x_0,x_1,\ldots,x_n=y$ such that $x_{j-1}Rx_j,
 \ 
j=1,\ldots,n$. Equivalently, $\leadsto_R$ is the reflexive and transitive closure of
$R$.
\end{definition}

\bigskip

The definition of \(\tau_R\) is expressed through the predecessor operator,
whereas the closed sets admit a more direct dynamical interpretation. The
following proposition identifies closedness with stability under forward
\(R\)-transitions and will be used repeatedly in the sequel.

\begin{proposition}
\label{prop:closed-forward}
A subset $C\subseteq X$ is closed in $(X,\tau_R)$ if and only if it is
forward invariant under $R$; that is,
\[
x\in C,\quad xRy
\quad \text{then}\quad
y\in C.
\]
\end{proposition}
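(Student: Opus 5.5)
The plan is to prove the statement directly from the definition of $\tau_R$ by passing to complements. A set $C$ is closed exactly when $U=X\setminus C$ is open, that is, when $R^{*}(X\setminus C)\subseteq X\setminus C$. The whole proof is then a matter of unwinding this inclusion pointwise and taking its contrapositive. No earlier result beyond the definition of $\tau_R$ is needed; Proposition~\ref{prop:relational-topology} only guarantees that $\tau_R$ is a topology, so that ``closed'' is meaningful.

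First I will show that closedness implies forward invariance. Assume $C$ is closed and suppose $x\in C$ and $xRy$. If $y\notin C$, then $y\in X\setminus C$, so by the definition of the predecessor operator $x\in R^{*}(X\setminus C)$. Openness of $X\setminus C$ then forces $x\in X\setminus C$, contradicting $x\in C$. Hence $y\in C$.

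Next I will show that forward invariance implies closedness. Assume $C$ is forward invariant and let $x\in R^{*}(X\setminus C)$. Then there is some $y\notin C$ with $xRy$. If $x$ were in $C$, forward invariance would give $y\in C$, which is impossible. So $x\in X\setminus C$. This proves $R^{*}(X\setminus C)\subseteq X\setminus C$, so $X\setminus C\in\tau_R$ and $C$ is closed.

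There is no real obstacle here. The only point requiring care is the direction of the relation: $R^{*}$ collects $R$-predecessors of points of $U$, so open sets are backward invariant. Complementation then turns backward invariance of $X\setminus C$ into forward invariance of $C$, and I will make sure the quantifiers are handled in that orientation.
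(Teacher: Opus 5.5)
Your proof is correct and follows the paper's argument essentially line for line. You pass to the complement, read closedness as $R^{*}(X\setminus C)\subseteq X\setminus C$, and prove each direction by the same short contradiction.
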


\begin{proof}
The set $C$ is closed if and only if $X\setminus C$ is open. Hence, $C\text{ is closed, if and only if}
 \ \
R^{*}(X\setminus C)\subseteq X\setminus C$. Suppose $x\in C$ and $xRy$. If $y\notin C$, then $y\in X\setminus C$, and therefore $x\in R^{*}(X\setminus C)$. This inclusion would imply $x\in X\setminus C$, contradicting $x\in C$. Hence $y\in C$. Conversely, suppose that $C$ is forward invariant. If $x\in R^{*}(X\setminus C)$, then there exists $y\notin C$ such that $xRy$. If $x$ belonged to
$C$, forward invariance would imply $y\in C$, a contradiction.
Therefore $x\notin C$, and hence
\[
R^{*}(X\setminus C)\subseteq X\setminus C.
\]
Thus $X\setminus C$ is open and $C$ is closed.
\end{proof}

\bigskip

Once closed sets are understood as forward invariant sets, singleton
closures can be described purely in terms of the relation. This is the key
link between the topology and the underlying reachability structure, and it
will later allow Goldbach representations to be read through closures.

\begin{theorem}
\label{thm:closure-reachability}
Let $R$ be a binary relation on $X$. Then, for every $x\in X$,
\[
\overline{\{x\}}^{\,\tau_R}
=
\{y\in X:x\leadsto_R y\}.
\]
\end{theorem}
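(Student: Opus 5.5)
Write $\mathrm{Reach}(x)=\{y\in X:x\leadsto_R y\}$. The plan is to prove the two inclusions separately. Both rely on Proposition~\ref{prop:closed-forward}: a set is closed in $(X,\tau_R)$ exactly when it is forward invariant under $R$. So $\overline{\{x\}}$ is the smallest forward invariant set containing $x$, and the theorem reduces to showing that $\mathrm{Reach}(x)$ is that smallest set.

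\emph{Step 1: $\mathrm{Reach}(x)$ is closed and contains $x$.} By reflexivity of $\leadsto_R$ we have $x\in\mathrm{Reach}(x)$. For forward invariance, take $y\in\mathrm{Reach}(x)$ and $yRz$. If $y=x$, then $x_0=x,\ x_1=z$ is a chain of length one. Otherwise there is a chain $x=x_0,\ldots,x_n=y$, and appending $x_{n+1}=z$ gives a chain from $x$ to $z$. In both cases $z\in\mathrm{Reach}(x)$. Proposition~\ref{prop:closed-forward} then shows $\mathrm{Reach}(x)$ is closed, so $\overline{\{x\}}\subseteq\mathrm{Reach}(x)$.

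\emph{Step 2: every closed $C$ with $x\in C$ contains $\mathrm{Reach}(x)$.} Let $C$ be such a set and take $y\in\mathrm{Reach}(x)$. If $y=x$ there is nothing to prove. Otherwise fix a chain $x=x_0,\ldots,x_n=y$ and show $x_j\in C$ by induction on $j$. The base case is $x_0=x\in C$. For the inductive step, $x_{j-1}\in C$ and $x_{j-1}Rx_j$ give $x_j\in C$ by forward invariance (Proposition~\ref{prop:closed-forward}). Applying this to $C=\overline{\{x\}}$ yields $\mathrm{Reach}(x)\subseteq\overline{\{x\}}$.

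Neither step is a real obstacle; the argument is routine once Proposition~\ref{prop:closed-forward} is available. The only point needing care is in Step 1: the reflexive case $y=x$ must be handled separately, because the definition of $\leadsto_R$ treats it apart from the chains with $n\ge1$.
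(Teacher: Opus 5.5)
Your proof is correct and follows essentially the same route as the paper: show $\{y:x\leadsto_R y\}$ is closed via Proposition~\ref{prop:closed-forward} by appending a step to a chain, then show by induction along a chain that any closed set containing $x$ contains it. Your separate treatment of the reflexive case $y=x$ is a small point of care that the paper leaves implicit.
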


\begin{proof}
Fix $x\in X$ and define
\[
\mathcal O_R(x)
=
\{y\in X:x\leadsto_R y\}.
\]
Since reachability is reflexive, $x\in\mathcal O_R(x)$. We claim that $\mathcal O_R(x)$ is closed. Indeed, suppose that $y\in\mathcal O_R(x)$ and $yRz$. Since $x\leadsto_Ry$, appending the relation $yRz$
gives $x\leadsto_R z$. Thus $z\in\mathcal O_R(x)$. By Proposition~\ref{prop:closed-forward},
$\mathcal O_R(x)$ is closed. Now let $C$ be any closed subset of $X$ containing $x$. Since $C$ is
forward invariant, whenever
\[
x=x_0Rx_1R\cdots Rx_n=y,
\]
an induction shows that
\[
x_0,x_1,\ldots,x_n\in C.
\]
Hence $y\in C$. Therefore $\mathcal O_R(x)\subseteq C$. Thus $\mathcal O_R(x)$ is the smallest closed subset of $X$
containing $x$, and consequently
\[
\overline{\{x\}}
=
\mathcal O_R(x)
=
\{y\in X:x\leadsto_Ry\}.
\]
\end{proof}

\section{The global relational core and the $wR_0$ property}

The reachability description of singleton closures suggests considering
their common intersection as a global invariant of the relational space.
This viewpoint is closely related to weak separation properties, in
particular the \(wR_0\) condition introduced by Di Maio
\cite{DiMaio1985} and later used in primal-topological formulations of
arithmetic problems \cite{VielmaGuale2019}. In this section we introduce
the global relational core and show that it consists precisely of the
points that can be reached from every point of the space.
\begin{definition}
Let $(X,\tau_R)$ be the relational topological space induced by $R$.
The set
\[
K_R(X)
=
\bigcap_{x\in X}\overline{\{x\}}
\]
will be called the \emph{global relational core} of $(X,R)$.
\end{definition}

\begin{definition}
A topological space $X$ is said to satisfy the $wR_0$ property if
\[
\bigcap_{x\in X}\overline{\{x\}}=\varnothing.
\]
\end{definition}

The global relational core is defined topologically, but the previous
closure--reachability theorem suggests that it should have a direct
relational description. The next theorem confirms this and at the same time
connects the core with the \(wR_0\) separation property.

\begin{theorem}[Global core characterization]
\label{thm:global-core}
Let $R$ be a binary relation on a nonempty set $X$. Then
\[
K_R(X)
=
\left\{
a\in X:
x\leadsto_R a
\text{ for every }x\in X
\right\}.
\]
Consequently, the following statements are equivalent:
\begin{enumerate}
    \item $K_R(X)\neq\varnothing$;
    \item there exists $a\in X$ such that
    \[
    x\leadsto_R a
    \qquad
    \text{for every }x\in X;
    \]
    \item $(X,\tau_R)$ is not $wR_0$.
\end{enumerate}
\end{theorem}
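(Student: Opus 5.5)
The plan is to reduce everything to Theorem~\ref{thm:closure-reachability}, which identifies each singleton closure with the set of points reachable from that singleton. With that description available, the set-theoretic identity for $K_R(X)$ becomes a direct unwinding of definitions. The equivalences then follow from the identity together with the definition of $wR_0$.

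First, I will fix $a\in X$ and argue by a chain of equivalences. By the definition of $K_R(X)$, $a\in K_R(X)$ holds if and only if $a\in\overline{\{x\}}$ for every $x\in X$. By Theorem~\ref{thm:closure-reachability}, $\overline{\{x\}}=\{y\in X:x\leadsto_R y\}$. Hence $a\in\overline{\{x\}}$ is equivalent to $x\leadsto_R a$. Quantifying over all $x\in X$ gives exactly
\[
K_R(X)=\{a\in X: x\leadsto_R a \text{ for every } x\in X\}.
\]
Both inclusions come out of this single chain, so no separate argument for each direction is needed.

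Second, for the equivalences I will read off the conditions directly. Statement (1)$\Leftrightarrow$(2) holds because, by the identity just proved, $K_R(X)$ is nonempty exactly when some $a$ is reachable from every $x$. Statement (1)$\Leftrightarrow$(3) holds because $wR_0$ is by definition the condition $\bigcap_{x\in X}\overline{\{x\}}=\varnothing$, that is, $K_R(X)=\varnothing$. Negating both sides gives that $(X,\tau_R)$ fails $wR_0$ if and only if $K_R(X)\neq\varnothing$.

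There is no substantive obstacle; the only point requiring care is that all closures are taken in $\tau_R$, so that Theorem~\ref{thm:closure-reachability} applies verbatim. The hypothesis that $X$ is nonempty ensures that the intersection is over a nonempty index family, so $K_R(X)$ is a genuine subset of $X$ rather than a vacuous intersection.
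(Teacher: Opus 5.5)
Your proposal is correct and follows essentially the same route as the paper: you rewrite each singleton closure via Theorem~\ref{thm:closure-reachability} to obtain the identity for $K_R(X)$, then read off (1)$\Leftrightarrow$(2) from that identity and (1)$\Leftrightarrow$(3) directly from the definition of $wR_0$. Your pointwise chain of equivalences is just the elementwise version of the paper's chain of set equalities.
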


\begin{proof}
By Theorem~\ref{thm:closure-reachability}, $\overline{\{x\}}
=
\{y\in X:x\leadsto_R y\}$. Therefore,
\begin{align*}
K_R(X)
&=
\bigcap_{x\in X}\overline{\{x\}}
\\
&=
\bigcap_{x\in X}
\{y\in X:x\leadsto_R y\}
\\
&=
\left\{
a\in X:
x\leadsto_Ra
\text{ for every }x\in X
\right\}.
\end{align*}
Hence $K_R(X)$ is nonempty if and only if there exists a point that
is reachable from every point of $X$. Finally, by the definition of
the $wR_0$ property,
\[
(X,\tau_R)\text{ is not }wR_0
 \ \text{if and only if} \ 
K_R(X)\neq\varnothing.
\]
The result follows.
\end{proof}

\bigskip

The previous characterization also imposes a strong internal structure on
the global core. Since every point of the ambient space reaches each core
point, core points necessarily communicate with one another, and
reachability from a core point cannot leave the core.

\begin{proposition}
\label{prop:core-strongly-connected}
Suppose that $K_R(X)\neq\varnothing$. Then any two points of
$K_R(X)$ are mutually reachable. Moreover, $K_R(X)$ is forward
invariant under reachability.
\end{proposition}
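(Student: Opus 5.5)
The whole argument rests on the relational description of the core in Theorem~\ref{thm:global-core}: a point $a$ lies in $K_R(X)$ exactly when $x\leadsto_R a$ for every $x\in X$. I will also use that $\leadsto_R$ is reflexive and transitive, which holds by definition since it is the reflexive and transitive closure of $R$. Beyond these two facts, no topology is needed.

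\emph{Mutual reachability.} Let $a,b\in K_R(X)$. Since $b\in K_R(X)$, every point of $X$ reaches $b$. Taking $x=a$ gives $a\leadsto_R b$. Symmetrically, $a\in K_R(X)$ and $x=b$ give $b\leadsto_R a$. So any two core points communicate. If $a=b$, the claim is just reflexivity.

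\emph{Forward invariance.} Let $a\in K_R(X)$ and suppose $a\leadsto_R y$; we must show $y\in K_R(X)$. Fix an arbitrary $x\in X$. Because $a$ is in the core, $x\leadsto_R a$. Transitivity of $\leadsto_R$ (concatenating the two finite chains, with the trivial case $x=a$ or $a=y$ absorbed by reflexivity) gives $x\leadsto_R y$. Since $x$ was arbitrary, Theorem~\ref{thm:global-core} yields $y\in K_R(X)$.

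Alternatively, forward invariance can be read topologically. By Theorem~\ref{thm:closure-reachability}, $K_R(X)$ is an intersection of closed sets, so it is closed. By Proposition~\ref{prop:closed-forward} it is then forward invariant under $R$, and an induction along chains extends this to $\leadsto_R$.

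There is no real obstacle here. The only point needing care is the concatenation of chains in the transitivity step, including the degenerate length-zero cases. The hypothesis $K_R(X)\neq\varnothing$ only serves to make the statement nonvacuous.
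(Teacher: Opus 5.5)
Your proof is correct and matches the paper's argument: mutual reachability comes from the core characterization with $x=a$ and $x=b$, and forward invariance from $x\leadsto_R a\leadsto_R y$ by transitivity. Your alternative topological route ($K_R(X)$ is closed as an intersection of closed sets, hence forward invariant by Proposition~\ref{prop:closed-forward}) is also valid, though the paper does not use it.
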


\begin{proof}
Let $a,b\in K_R(X)$. Since $b\in K_R(X)$, the definition of the
global core implies that $x\leadsto_R b
 \  \ 
\text{for every }x\in X$. Taking $x=a$, we obtain $a\leadsto_R b$. Similarly, $b\leadsto_R a$. Now suppose that $a\in K_R(X)$ and $a\leadsto_Rz$. For every $x\in X$, since $a\in K_R(X)$, $x\leadsto_Ra$. By transitivity of reachability, $x\leadsto_Ra\leadsto_Rz$, and hence $x\leadsto_Rz$. Thus $z\in K_R(X)$.
\end{proof}

\section{The Goldbach relational topology}

We now specialize the general relational construction to the strong
Goldbach conjecture. Unlike functional primal spaces, which arise from the
iteration of a single map \cite{Echi2012,ShiraziGolestani2011}, the Goldbach
problem naturally involves several simultaneous prime decompositions. Our
construction therefore uses a multivalued map and the binary relation it
induces on the even integers. This relational viewpoint provides the
Alexandroff space that will serve as the basic topological model for the
remainder of the paper. Let $\mathbb P$ denote the set of prime numbers, and define
\[
E_6
=
\{2n:n\geq3\}
=
\{6,8,10,12,\ldots\}.
\]
The case $4=2+2$ is immediate and will therefore be treated
separately when necessary.

\bigskip

\begin{definition}
For each $m\in E_6$, define its \emph{Goldbach prime set} by
\[
S(m)
=
\{p\in\mathbb P:m-p\in\mathbb P\}.
\]
\end{definition}

\bigskip

Thus $S(m)\neq\varnothing$ if and only if $m$ admits a representation as the sum of two prime
numbers. For instance, $S(10)=\{3,5,7\}$, since $10=3+7=5+5=7+3$.

\bigskip

\begin{definition}
For $m\in E_6$, define the multivalued Goldbach map $F_G(m)
=
\{p+3:p\in S(m)\}$. The \emph{Goldbach relation} $\mathcal R_G$ on $E_6$ is defined by
\[
m\mathcal R_G n
 \ \  \text{if and only if,} \ \ 
n\in F_G(m).
\]
Equivalently, $m\mathcal R_G n$
if and only if there exists $p\in S(m)$
such that $n=p+3$.
\end{definition}

\begin{remark}
If $m\in E_6$ and $p\in S(m)$, then $p$ must be an odd prime. Indeed,
$p=2$ would imply that $m-2$ is an even integer greater than or equal
to $4$, and hence cannot be prime. Consequently, $p+3\in E_6$. Thus
\[
F_G(m)\subseteq E_6
\]
for every $m\in E_6$.
\end{remark}

\bigskip

\begin{example}
Since $S(10)=\{3,5,7\}$, we have $F_G(10)
=
\{6,8,10\}$. Hence
\[
10\mathcal R_G6,
\qquad
10\mathcal R_G8,
\qquad
10\mathcal R_G10.
\]
\end{example}

\bigskip

\begin{definition}
For \(A\subseteq E_6\), let
\[
\mathcal R_G^{*}(A)
=
\{m\in E_6:\text{ there exists }n\in A\text{ such that }m\mathcal R_G n\}.
\]
The \emph{Goldbach relational topology} is the topology
\[
\tau_G
=
\left\{
U\subseteq E_6:
\mathcal R_G^{*}(U)\subseteq U
\right\}.
\]
The corresponding topological space will be denoted by $(E_6,\tau_G)$.
\end{definition}

\bigskip

The preceding construction places the Goldbach relation within the general
relational framework developed in Section~2. Since the associated topology
is defined through the same predecessor condition, the general results on
relational topologies apply directly to \(\mathcal R_G\). In particular, the
Goldbach relational topology inherits the Alexandroff property established
above.
\begin{corollary}
The Goldbach relational topology $\tau_G$ is an Alexandroff topology
on $E_6$.
\end{corollary}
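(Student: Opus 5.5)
This corollary will follow by specializing Proposition~\ref{prop:relational-topology} to the Goldbach relation, so the plan is to check that the specialization is legitimate. First I will confirm that $\mathcal R_G$ is a binary relation on $E_6$, that is, $\mathcal R_G\subseteq E_6\times E_6$. By definition, $m\mathcal R_G n$ requires $m\in E_6$ and $n\in F_G(m)$. The preceding Remark shows that every $p\in S(m)$ is an odd prime, so $p\geq 3$ and $p+3$ is an even integer at least $6$. Hence $F_G(m)\subseteq E_6$, and the relation indeed lives on $E_6$.

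Next I will observe that the operator $\mathcal R_G^{*}$ in the definition of $\tau_G$ is exactly the relational predecessor operator $R^{*}$ of Section~2 with $X=E_6$ and $R=\mathcal R_G$. Consequently $\tau_G=\tau_{\mathcal R_G}$ as families of subsets of $E_6$. Since $E_6$ is nonempty, Proposition~\ref{prop:relational-topology} applies and states that $\tau_{\mathcal R_G}$ is an Alexandroff topology on $E_6$. That is precisely the claim.

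There is no genuine obstacle. The only point needing care is the well-definedness step, namely that $F_G$ maps $E_6$ into subsets of $E_6$, and the Remark already supplies this. For completeness I would also note that some sets $S(m)$, and hence $F_G(m)$, might be empty. This affects nothing, because Proposition~\ref{prop:relational-topology} holds for arbitrary relations, including those in which some points have no successors.
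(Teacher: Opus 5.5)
Your proposal is correct and matches the paper, which states the corollary without a separate proof: it notes that $\tau_G$ is the relational topology $\tau_{\mathcal R_G}$ of Section~2 and applies Proposition~\ref{prop:relational-topology}. Your extra check that $F_G(m)\subseteq E_6$ is the same well-definedness fact the paper records in the Remark preceding the corollary.
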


The structure of the Goldbach relation is governed by a distinguished
point, namely \(6\). In functional Alexandroff and primal spaces, orbit and
reachability structures play a central role in describing closures
\cite{EchiTurki2019,MejiasVielmaAponteLima2025}. In the present relational
setting, we obtain an analogous phenomenon: every Goldbach representation
produces a short path ending at \(6\). 
\bigskip

The essential feature of the chosen relation is that a single Goldbach
representation creates a short route toward the distinguished point \(6\).
The next lemma makes this mechanism precise and is the arithmetic input
behind all subsequent topological characterizations.

\begin{lemma}
\label{lem:goldbach-to-six}
Let $m\in E_6$. If $S(m)\neq\varnothing$, then $m\leadsto_{\mathcal R_G}6$. More precisely, $6$ can be reached from $m$ in at most two
$\mathcal R_G$-steps.
\end{lemma}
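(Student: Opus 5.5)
The argument is a direct two-step construction that uses only the definition of $\mathcal R_G$ and the remark that Goldbach primes of elements of $E_6$ are odd. Fix $m\in E_6$ with $S(m)\neq\varnothing$ and choose some $p\in S(m)$. By the remark following the definition of $\mathcal R_G$, $p$ is an odd prime, so $n:=p+3$ is even and $n\geq 6$. Hence $n\in E_6$, and by definition $m\,\mathcal R_G\, n$. This is the first step.

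For the second step, observe that $3\in S(n)$. Indeed, $3$ is prime and $n-3=p$ is prime. Therefore $3+3=6\in F_G(n)$, that is, $n\,\mathcal R_G\, 6$. Concatenating the two steps gives the chain $m\,\mathcal R_G\,(p+3)\,\mathcal R_G\,6$. By the definition of reachability this yields $m\leadsto_{\mathcal R_G}6$ in at most two steps. The degenerate case $p=3$ gives $n=6$ directly, so one step already suffices there; this is the reason the statement says ``at most''.

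There is no real obstacle. The only point requiring care is checking that $p+3$ actually lies in $E_6$, so that the intermediate point belongs to the space on which $\mathcal R_G$ is defined. This is exactly what the earlier remark supplies: $p=2$ is impossible, since $m-2\geq 4$ is even and hence not prime.
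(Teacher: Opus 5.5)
Your proof is correct and follows the paper's argument exactly: choose $p\in S(m)$, step from $m$ to $p+3$, and then use $3\in S(p+3)$ to step to $6$. Your explicit check that $p+3\in E_6$, using the remark that rules out $p=2$, covers a detail the paper's proof leaves implicit.
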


\begin{proof}
Assume that $S(m)\neq\varnothing$. Choose $p\in S(m)$. By the definition of $F_G$, $p+3\in F_G(m)$, and hence $m\mathcal R_G(p+3)$. Let $n=p+3$. Since $p$ and $3$ are prime, $n-3=p\in\mathbb P$, and therefore $3\in S(n)$. It follows that $3+3=6\in F_G(n)$, and hence $n\mathcal R_G6$. Consequently,
\[
m\mathcal R_G(p+3)\mathcal R_G6.
\]
Therefore
\[
m\leadsto_{\mathcal R_G}6.
\]
\end{proof}

The reachability statement has an immediate topological interpretation.
Indeed, the general closure--reachability theorem converts the finite
Goldbach path into membership of \(6\) in a singleton closure. This simple
observation will be used in the proof of the main theorem.

\begin{corollary}
\label{cor:six-closure}
If $S(m)\neq\varnothing$, then $6\in\overline{\{m\}}^{\,\tau_G}$.
\end{corollary}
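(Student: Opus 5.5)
The corollary follows by combining Lemma~\ref{lem:goldbach-to-six} with Theorem~\ref{thm:closure-reachability}; no new arithmetic is needed.

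Fix $m\in E_6$ with $S(m)\neq\varnothing$. Lemma~\ref{lem:goldbach-to-six} gives $m\leadsto_{\mathcal R_G}6$, realized concretely by the chain $m\,\mathcal R_G\,(p+3)\,\mathcal R_G\,6$ for any $p\in S(m)$.

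The Goldbach relational topology $\tau_G$ is by definition the topology $\tau_R$ of Section~2 with $X=E_6$ and $R=\mathcal R_G$. The remark preceding the example shows that $\mathcal R_G$ is a binary relation on $E_6$: every $p\in S(m)$ is odd, so $F_G(m)\subseteq E_6$. Hence Theorem~\ref{thm:closure-reachability} applies and yields
\[
\overline{\{m\}}^{\,\tau_G}=\{n\in E_6:m\leadsto_{\mathcal R_G}n\}.
\]
Since $6\in E_6$ is reachable from $m$, we get $6\in\overline{\{m\}}^{\,\tau_G}$.

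The only point needing care is that this identification is legitimate, namely that all intermediate points of the chain stay in $E_6$. This is exactly what the remark on $F_G(m)\subseteq E_6$ guarantees, since $p+3\in E_6$. There is no real obstacle beyond this bookkeeping.

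As an alternative that avoids reachability altogether, one can argue directly from Proposition~\ref{prop:closed-forward}. Any closed set $C$ containing $m$ is forward invariant. It therefore contains $p+3$ because $m\,\mathcal R_G\,(p+3)$, and then contains $6$ because $(p+3)\,\mathcal R_G\,6$. Since the closure of $\{m\}$ is the intersection of all such $C$, it contains $6$.
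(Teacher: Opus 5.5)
Your proof is correct and follows the paper's argument exactly: the paper also combines Lemma~\ref{lem:goldbach-to-six} ($m\leadsto_{\mathcal R_G}6$) with Theorem~\ref{thm:closure-reachability}. Your alternative via forward invariance of closed sets is also valid; it just unpacks the proof of Theorem~\ref{thm:closure-reachability} for this two-step chain.
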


\begin{proof}
By Lemma~\ref{lem:goldbach-to-six}, $m\leadsto_{\mathcal R_G}6$. The conclusion follows from
Theorem~\ref{thm:closure-reachability}.
\end{proof}

\bigskip

The point $6$ plays a distinguished role in the Goldbach relational
space. To identify a common point of all singleton closures, it is also necessary
to understand the closure of the distinguished point itself. The next lemma
shows that \(6\) has no nontrivial forward reachability and hence forms a
closed singleton in the Goldbach relational space.

\begin{lemma}
\label{lem:closure-six}
In $(E_6,\tau_G)$, $\overline{\{6\}}=\{6\}$.
\end{lemma}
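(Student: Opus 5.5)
By Theorem~\ref{thm:closure-reachability}, $\overline{\{6\}}$ is the set of points reachable from $6$ through finite $\mathcal R_G$-chains. It therefore suffices to compute the forward image $F_G(6)$ and show that it equals $\{6\}$. Once this holds, every chain starting at $6$ stays at $6$, so the reachable set is exactly $\{6\}$. Equivalently, in the language of Proposition~\ref{prop:closed-forward}, the singleton $\{6\}$ is forward invariant, hence closed, and so it equals its own closure.

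The key step is the arithmetic computation of $S(6)$. A prime $p$ lies in $S(6)$ exactly when $6-p$ is prime. Since $p\geq 2$ and $6-p\geq 2$, we only need $p\in\{2,3\}$ with $p\le 4$. For $p=2$ we get $6-2=4$, which is not prime, so $2\notin S(6)$; this also follows from the earlier Remark excluding $p=2$. For $p=3$ we get $6-3=3$, which is prime. Hence $S(6)=\{3\}$ and $F_G(6)=\{3+3\}=\{6\}$. The only $\mathcal R_G$-successor of $6$ is therefore $6$ itself.

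To finish, take any chain $6=x_0\,\mathcal R_G\,x_1\,\mathcal R_G\cdots\mathcal R_G\,x_n$. Induction on $j$ shows that $x_j=6$ for all $j$: if $x_{j-1}=6$, then $x_j\in F_G(6)=\{6\}$. Thus $\{y: 6\leadsto_{\mathcal R_G} y\}=\{6\}$, and Theorem~\ref{thm:closure-reachability} gives the claim. Every step is a direct check, and the only place anything could go wrong is the small case analysis of $S(6)$.
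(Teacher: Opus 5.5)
Your proof is correct and follows the paper's route: compute $S(6)=\{3\}$, hence $F_G(6)=\{6\}$, so only $6$ is reachable from $6$, and Theorem~\ref{thm:closure-reachability} gives $\overline{\{6\}}=\{6\}$. Your explicit case check of $S(6)$ and the induction along chains spell out details the paper leaves implicit.
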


\begin{proof}
Since $6=3+3$, we have $S(6)=\{3\}$. Hence $F_G(6)=\{3+3\}=\{6\}$. Thus the only $\mathcal R_G$-transition starting from $6$ is $6\mathcal R_G6$. Consequently, the only point reachable from $6$ is $6$ itself.
By Theorem~\ref{thm:closure-reachability}, $\overline{\{6\}}=\{6\}$.
\end{proof}

\bigskip

The opposite arithmetic situation is equally simple from the relational
point of view. If an even integer admits no Goldbach representation, then
the multivalued map has no value at that point, so no nontrivial relational
path can start there. This yields the following closure description.

\begin{proposition}
\label{prop:failure-goldbach-isolated}
Let $m\in E_6$. If $S(m)=\varnothing$, then $\overline{\{m\}}=\{m\}$.
\end{proposition}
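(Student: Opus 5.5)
The plan is to compute the forward $\mathcal R_G$-transitions out of $m$ and then invoke Theorem~\ref{thm:closure-reachability}, in the same way as in the proof of Lemma~\ref{lem:closure-six}.

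First, assume $S(m)=\varnothing$. By the definition of the multivalued Goldbach map,
\[
F_G(m)=\{p+3:p\in S(m)\}=\varnothing .
\]
Hence there is no $n\in E_6$ with $m\mathcal R_G n$. So $m$ has no outgoing $\mathcal R_G$-transition at all; in particular, unlike the point $6$, it does not even carry a loop.

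Next, describe the reachability set $\{y\in E_6:m\leadsto_{\mathcal R_G}y\}$. By the definition of reachability, $m\leadsto_{\mathcal R_G}y$ means either $y=m$, or there is a chain $m=x_0\mathcal R_G x_1\mathcal R_G\cdots\mathcal R_G x_n=y$ with $n\ge1$. A chain of the second kind would require $m\mathcal R_G x_1$, which is impossible since $F_G(m)=\varnothing$. Therefore the reachability set is exactly $\{m\}$. Theorem~\ref{thm:closure-reachability} then gives $\overline{\{m\}}=\{m\}$.

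As an equivalent check, one can instead show directly via Proposition~\ref{prop:closed-forward} that $\{m\}$ is closed. It is forward invariant vacuously, because no $y$ satisfies $m\mathcal R_G y$. I expect no real obstacle here. The only point needing care is the empty-chain case in the definition of $\leadsto_{\mathcal R_G}$: reflexivity is built into the definition rather than supplied by a loop, so $m$ still belongs to its own closure.
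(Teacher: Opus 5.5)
Your proof is correct and follows the paper's argument: $S(m)=\varnothing$ forces $F_G(m)=\varnothing$, so $m$ reaches only itself, and Theorem~\ref{thm:closure-reachability} gives $\overline{\{m\}}=\{m\}$. Your remark that $m\in\overline{\{m\}}$ comes from the reflexive clause of $\leadsto_{\mathcal R_G}$ rather than from a loop is accurate, and the paper leaves it implicit.
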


\begin{proof}
If $S(m)=\varnothing$, then $F_G(m)=\varnothing$. Hence there is no nontrivial $\mathcal R_G$-transition starting from
$m$. Therefore the only point reachable from $m$ is $m$ itself.
Theorem~\ref{thm:closure-reachability} yields $\overline{\{m\}}=\{m\}$.
\end{proof}

\section{A $wR_0$ characterization of the Goldbach conjecture}

We are now in a position to establish the main result. Throughout this section, by the Goldbach conjecture we mean the strong
Goldbach conjecture.

The preceding lemmas isolate the exact role of the point \(6\): every
Goldbach-admissible even integer reaches \(6\), while \(6\) itself has
singleton closure. These two facts allow the arithmetic conjecture to be
encoded by the common intersection of singleton closures.

\begin{theorem}[Main characterization theorem]
\label{thm:main-goldbach}
The following statements are equivalent:
\begin{enumerate}
    \item[\rm (i)] The strong Goldbach conjecture is true.

    \item[\rm (ii)] $\bigcap_{m\in E_6}\overline{\{m\}}
    \neq\varnothing$.

    \item[\rm (iii)] $\bigcap_{m\in E_6}\overline{\{m\}}
    =
    \{6\}$.
    
    \item[\rm (iv)] The Goldbach relational space $(E_6,\tau_G)$ is not $wR_0$.
\end{enumerate}
\end{theorem}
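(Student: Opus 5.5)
The plan is to prove the cycle (i)$\Rightarrow$(iii)$\Rightarrow$(ii)$\Rightarrow$(i), and then to attach (iv) to (ii) using Theorem~\ref{thm:global-core}. Throughout, write $K=\bigcap_{m\in E_6}\overline{\{m\}}$. The first observation is that the intersection includes the term $m=6$, so Lemma~\ref{lem:closure-six} gives $K\subseteq\overline{\{6\}}=\{6\}$ unconditionally. Consequently $K$ is always either $\varnothing$ or $\{6\}$. This single fact does most of the work.

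For (i)$\Rightarrow$(iii), assume the strong Goldbach conjecture. Then $S(m)\neq\varnothing$ for every $m\in E_6$, and Corollary~\ref{cor:six-closure} gives $6\in\overline{\{m\}}$ for all $m$. Hence $6\in K$, and combined with $K\subseteq\{6\}$ this yields $K=\{6\}$. The step (iii)$\Rightarrow$(ii) is trivial.

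For (ii)$\Rightarrow$(i), argue by contraposition. Suppose some even $m\geq 6$ is not a sum of two primes, so $S(m)=\varnothing$; the value $4=2+2$ is excluded, as the paper notes. Proposition~\ref{prop:failure-goldbach-isolated} gives $\overline{\{m\}}=\{m\}$, and since $m\neq 6$ (because $6=3+3$), we get
\[
K\subseteq \overline{\{6\}}\cap\overline{\{m\}}=\{6\}\cap\{m\}=\varnothing .
\]
So (ii) fails. Finally, (ii)$\Leftrightarrow$(iv) is exactly the definition of $wR_0$ applied to $(E_6,\tau_G)$, as recorded in Theorem~\ref{thm:global-core}.

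I expect no genuine obstacle. The only point needing care is that the strong conjecture (every even integer $>2$) must be identified with ``$S(m)\neq\varnothing$ for all $m\in E_6$'', with the case $4$ settled separately by $4=2+2$. One must also check that a hypothetical counterexample $m$ is distinct from $6$, so that the two singleton closures are disjoint.
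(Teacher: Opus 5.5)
Your proposal is correct and follows essentially the same route as the paper: the cycle (i)$\Rightarrow$(iii)$\Rightarrow$(ii)$\Rightarrow$(i) rests on $\bigcap_{m\in E_6}\overline{\{m\}}\subseteq\overline{\{6\}}=\{6\}$, on $6$ lying in every singleton closure when Goldbach holds, and on a Goldbach exception $m_0\neq 6$ having $\overline{\{m_0\}}=\{m_0\}$, with (iv) read off from the definition of $wR_0$. The only difference is cosmetic: you prove (ii)$\Rightarrow$(i) by contraposition, whereas the paper argues by contradiction.
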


\begin{proof}

Assume first that the strong Goldbach conjecture is true. Then every
$m\in E_6$ can be written as the sum of two primes. Equivalently, $S(m)\neq\varnothing
 \ 
\text{for every }m\in E_6$. By Corollary~\ref{cor:six-closure}, $6\in\overline{\{m\}}
 \ \
\text{for every }m\in E_6$. Hence $6\in
\bigcap_{m\in E_6}\overline{\{m\}}$. Therefore
\[
\{6\}
\subseteq
\bigcap_{m\in E_6}\overline{\{m\}}.
\]
On the other hand, since $6\in E_6$,
\[
\bigcap_{m\in E_6}\overline{\{m\}}
\subseteq
\overline{\{6\}}.
\]
By Lemma~\ref{lem:closure-six}, $\overline{\{6\}}=\{6\}$. Consequently,
\[
\bigcap_{m\in E_6}\overline{\{m\}}
=
\{6\}.
\]
Thus $\mathrm{(i)}\ \text{implies}\ \mathrm{(iii)}$. Clearly, $\mathrm{(iii)}\ \text{implies}\ \mathrm{(ii)}$. We now prove that $\mathrm{(ii)}\ \text{implies}\ \mathrm{(i)}$. Suppose that $\bigcap_{m\in E_6}\overline{\{m\}}
\neq\varnothing$. Since this intersection is contained in $\overline{\{6\}}$ and $\overline{\{6\}}=\{6\}$, we necessarily have $6\in\overline{\{m\}}
 \ \
\text{for every }m\in E_6$. Assume, by contradiction, that the strong Goldbach conjecture is
false. Since $4=2+2$, there must exist some $m_0\in E_6$ which cannot be expressed as the sum of two primes. Equivalently, $S(m_0)=\varnothing$. By Proposition~\ref{prop:failure-goldbach-isolated}, $\overline{\{m_0\}}=\{m_0\}$.
Since $m_0\neq6$, $6\notin\overline{\{m_0\}}$, contradicting $6\in
\bigcap_{m\in E_6}\overline{\{m\}}$. Hence every even integer greater than or equal to $6$ is a sum of two
primes. Together with $4=2+2$, this proves the strong Goldbach conjecture.
Finally, the last implication follows directly from the definition of the $wR_0$ property,

\end{proof}

The main theorem can be restated directly in terms of the global
relational core introduced earlier. This formulation emphasizes that the
Goldbach conjecture is equivalent to the existence of a globally reachable
point in the Goldbach relational space.

\begin{corollary}
\label{cor:global-core-goldbach}
The strong Goldbach conjecture is true if and only if the global
relational core of the Goldbach space is nonempty.
\end{corollary}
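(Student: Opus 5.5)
The corollary is a direct translation of Theorem~\ref{thm:main-goldbach}, and I would prove it simply by unwinding the definition of the global relational core. Specialized to $X=E_6$ and $R=\mathcal R_G$, the definition reads
\[
K_{\mathcal R_G}(E_6)=\bigcap_{m\in E_6}\overline{\{m\}}^{\,\tau_G}.
\]
Under this identification, the assertion ``the global relational core is nonempty'' is literally statement (ii) of Theorem~\ref{thm:main-goldbach}. The equivalence (i)$\Leftrightarrow$(ii) established there then gives the corollary at once.

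For completeness, I would record both directions explicitly.

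\emph{Forward direction.} If the strong Goldbach conjecture holds, then (i)$\Rightarrow$(iii) gives $K_{\mathcal R_G}(E_6)=\{6\}\neq\varnothing$.

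\emph{Reverse direction.} Suppose $K_{\mathcal R_G}(E_6)\neq\varnothing$. Since $6\in E_6$, we have $K_{\mathcal R_G}(E_6)\subseteq\overline{\{6\}}=\{6\}$ by Lemma~\ref{lem:closure-six}, so $6$ lies in every singleton closure. If some $m_0\in E_6$ had $S(m_0)=\varnothing$, then Proposition~\ref{prop:failure-goldbach-isolated} would give $\overline{\{m_0\}}=\{m_0\}\not\ni 6$, a contradiction. Hence every even number at least $6$ is a sum of two primes, and $4=2+2$ completes the conjecture.

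Optionally, I would also phrase the result through Theorem~\ref{thm:global-core}: the conjecture is equivalent to the existence of a point, necessarily $6$, that is $\mathcal R_G$-reachable from every $m\in E_6$.

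There is no genuine obstacle here. The only point needing care is the bookkeeping identification of $K_{\mathcal R_G}(E_6)$ with the intersection appearing in Theorem~\ref{thm:main-goldbach}, namely that the closures are taken in $\tau_G$, which is exactly $\tau_{\mathcal R_G}$.
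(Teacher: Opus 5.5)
Your proposal is correct and matches the paper's proof: identify $K_{\mathcal R_G}(E_6)$ with $\bigcap_{m\in E_6}\overline{\{m\}}$ by definition and invoke (i)$\Leftrightarrow$(ii) of Theorem~\ref{thm:main-goldbach}. Your explicit reverse direction just reproduces that theorem's argument. There it uses $m_0\neq 6$, which holds because $S(6)=\{3\}\neq\varnothing$.
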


\begin{proof}
By definition,
\[
K_{\mathcal R_G}(E_6)
=
\bigcap_{m\in E_6}\overline{\{m\}}.
\]
The result follows immediately from
Theorem~\ref{thm:main-goldbach}.
\end{proof}

\begin{remark}
Theorem~\ref{thm:main-goldbach} does not assert a proof of the
Goldbach conjecture. Rather, it provides an equivalent topological
formulation of the conjecture. The arithmetic statement that every
even integer admits a Goldbach representation is transformed into
the existence of a common point in all singleton closures of the
relational space $(E_6,\tau_G)$.
\end{remark}

\begin{remark}
The role of the number $6$ is intrinsic to the construction. Indeed,
if $p$ occurs in a Goldbach representation of $m$, then $m\mathcal R_G(p+3)$,
while \(3\in S(p+3)\) implies $(p+3)\mathcal R_G6$. Thus every Goldbach
representation generates a relational path of
length at most two terminating at $6$.
\end{remark}

\bigskip

Having established the \(wR_0\) characterization, we now investigate how
the arithmetic structure of Goldbach representations is reflected in more
classical global topological properties. Connectedness has appeared before
in graph-theoretic formulations of Goldbach
\cite{DasGhoshGhoshSen2021}, although in a different setting from the
relational Alexandroff space considered here.  We first show that a Goldbach exception produces
a nontrivial clopen singleton, while the validity of Goldbach prevents such
a separation.

\begin{theorem}[Connectedness characterization]
\label{thm:goldbach-connected}
The following statements are equivalent:
\begin{enumerate}
    \item[\rm (i)] The strong Goldbach conjecture is true;
    
    \item[\rm (ii)] The Goldbach relational space
    $(E_6,\tau_G)$ is connected.
\end{enumerate}
\end{theorem}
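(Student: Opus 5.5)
We prove the two implications separately, using only the closure--reachability description of Theorem~\ref{thm:closure-reachability} and the two arithmetic facts isolated in Section~4. In an Alexandroff space of the form $(X,\tau_R)$, a set is clopen exactly when it is both forward and backward invariant under $R$. By Proposition~\ref{prop:closed-forward}, closedness is forward invariance. Openness is $R^{*}(U)\subseteq U$, which is backward invariance. So the natural route is to argue with reachability paths.

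For (i)$\Rightarrow$(ii), assume the strong Goldbach conjecture. Let $U\subseteq E_6$ be a nonempty clopen set, and suppose first that $6\in U$. Take any $m\in E_6$. By Lemma~\ref{lem:goldbach-to-six}, $m\,\mathcal R_G\,(p+3)\,\mathcal R_G\,6$ for some $p\in S(m)$. Since $U$ is open, $R^{*}(U)\subseteq U$. Applying this to the step $(p+3)\,\mathcal R_G\,6$ gives $p+3\in U$, and applying it again to $m\,\mathcal R_G\,(p+3)$ gives $m\in U$. Hence $U=E_6$. If instead $6\notin U$, then $E_6\setminus U$ is also clopen and contains $6$, so the same argument gives $E_6\setminus U=E_6$, that is, $U=\varnothing$. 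This contradicts the assumption that $U$ is nonempty. Therefore the only clopen sets are $\varnothing$ and $E_6$, and the space is connected.

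For (ii)$\Rightarrow$(i), argue by contraposition. Suppose the conjecture fails. Since $4=2+2$, there is some $m_0\in E_6$ with $S(m_0)=\varnothing$, and necessarily $m_0\neq 6$.
\begin{itemize}
\item $\{m_0\}$ is closed, by Proposition~\ref{prop:failure-goldbach-isolated}.
\item $\{m_0\}$ is open, i.e.\ $\mathcal R_G^{*}(\{m_0\})\subseteq\{m_0\}$. The only thing to check is that no $m$ satisfies $m\,\mathcal R_G\,m_0$. Such a relation would mean $m_0=p+3$ for some prime $p$ with $m-p$ prime. Then $m_0-3=p$ is prime, so $3\in S(m_0)$, contradicting $S(m_0)=\varnothing$. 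Hence $m_0$ has no predecessors at all.
\end{itemize}
So $\{m_0\}$ is a clopen set. It is proper and nonempty because $6\notin\{m_0\}$, and therefore $(E_6,\tau_G)$ is disconnected.

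The main point requiring care is the openness of $\{m_0\}$ in the second direction. It rests on the observation that every point in the image of $F_G$ has $3$ as a Goldbach prime, and hence is not an exception. The remaining steps are routine bookkeeping with forward and backward invariance.
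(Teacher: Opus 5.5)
Your proof is correct and follows the paper's argument. Both directions rest on the two-step path $m\,\mathcal R_G\,(p+3)\,\mathcal R_G\,6$ and on the observation that a Goldbach exception $m_0$ has no $\mathcal R_G$-predecessors, which makes $\{m_0\}$ a proper nonempty clopen set. The only cosmetic difference is in (i)$\Rightarrow$(ii): you use openness to pull $6\in U$ back to every point, while the paper uses closedness of both $A$ and $E_6\setminus A$ to push every point forward to $6$, and these are dual forms of the same fact.
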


\begin{proof}
Assume first that the strong Goldbach conjecture is true. Let
$A\subseteq E_6$ be a nonempty subset which is both open and closed.
Choose $m\in A$. Since the Goldbach conjecture holds, $S(m)\neq\varnothing$. By Lemma~\ref{lem:goldbach-to-six}, $m\leadsto_{\mathcal R_G}6$. Since $A$ is closed, Proposition~\ref{prop:closed-forward} implies that $A$ is forward invariant under $\mathcal R_G$. Hence $6\in A$. Suppose that $E_6\setminus A\neq\varnothing$. Since $A$ is open,
$E_6\setminus A$ is closed. Choose $n\in E_6\setminus A$. Again, by the Goldbach conjecture, $n\leadsto_{\mathcal R_G}6$. Since $E_6\setminus A$ is closed and therefore forward invariant,
we obtain $6\in E_6\setminus A$, contradicting $6\in A$. Hence $A=E_6$. Thus the only clopen subsets of $E_6$ are $\varnothing$ and $E_6$,
and consequently $(E_6,\tau_G)$ is connected. Conversely, suppose that the strong Goldbach conjecture is false.
Since $4=2+2$, there exists some $m_0\in E_6$ such that $S(m_0)=\varnothing$. By Proposition~\ref{prop:failure-goldbach-isolated}, $\overline{\{m_0\}}=\{m_0\}$,
so $\{m_0\}$ is closed. We now show that $\{m_0\}$ is also open. Suppose that there existed $n\in E_6$ such that $n\mathcal R_G m_0$. By the definition of $\mathcal R_G$, there would exist
$p\in S(n)$ such that $m_0=p+3$. Since both $p$ and $3$ are prime, this would give a Goldbach representation $m_0=p+3$, contradicting $S(m_0)=\varnothing$. Therefore $\mathcal R_G^{*}(\{m_0\})=\varnothing$. Hence
\[
\mathcal R_G^{*}(\{m_0\})
\subseteq
\{m_0\},
\]
and thus $\{m_0\}\in\tau_G$. Therefore $\{m_0\}$ is a nonempty proper clopen subset of $E_6$.
Consequently $(E_6,\tau_G)$ is disconnected.
\end{proof}

\bigskip

\begin{definition}
Define the set of Goldbach exceptions by
\[
\mathcal E_G
=
\{m\in E_6:S(m)=\varnothing\}.
\]
\end{definition}

\bigskip

Connectedness distinguishes between the validity and failure of the
conjecture, whereas compactness detects a weaker arithmetic phenomenon.
The next theorem shows that compactness records exactly whether the set of
possible Goldbach exceptions is finite.

\begin{theorem}[Compactness characterization]
\label{thm:goldbach-compact}
The Goldbach relational space $(E_6,\tau_G)$ is compact if and only if
the set
\[
\mathcal E_G
=
\{m\in E_6:S(m)=\varnothing\}
\]
is finite.
\end{theorem}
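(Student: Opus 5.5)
Both directions follow from the structure of open sets in $\tau_G$, so the plan is to first record what every open set must contain. If $U$ is open and $U$ contains any $m\notin\mathcal E_G$, then $U$ contains $6$. Indeed, $m$ can then be written as $m=p+q$ with $p,q$ odd primes, so $3\in S(p+3)$ and hence $(p+3)\mathcal R_G 6$. Here I would rather use the other direction of the relation. By definition, $n\mathcal R_G 6$ holds if and only if $3\in S(n)$, that is, iff $n-3$ is prime. So openness only forces predecessors into $U$; it does not force successors. Consequently the claim that an open set containing $m$ contains $6$ is false in general. What is true is the reverse: every open set containing $6$ contains every $n$ with $n-3$ prime. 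I also expect the minimal open neighbourhood of $x$ to be the set of points from which $x$ is reachable, by the dual of Theorem~\ref{thm:closure-reachability}.

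\textbf{Sufficiency.} Assume $\mathcal E_G$ is finite and take an open cover $\{U_i\}$. First choose $U_{i_0}\ni 6$. By Theorem~\ref{thm:closure-reachability} together with Lemma~\ref{lem:goldbach-to-six}, every $m\notin\mathcal E_G$ reaches $6$. Since $U_{i_0}$ is open, it is closed under $\mathcal R_G$-predecessors, so iterating gives that every point reaching $6$ lies in $U_{i_0}$. Hence $E_6\setminus\mathcal E_G\subseteq U_{i_0}$. The finitely many points of $\mathcal E_G$ are then covered by one $U_i$ each, which yields a finite subcover.

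\textbf{Necessity.} Assume $\mathcal E_G$ is infinite. By the argument in the proof of Theorem~\ref{thm:goldbach-connected}, each $\{m_0\}$ with $m_0\in\mathcal E_G$ is open, since such $m_0$ has no predecessors. Its complement $E_6\setminus\mathcal E_G$ is also open, because a predecessor of a non-exception is itself a non-exception: it has a nonempty $S$ by definition of $\mathcal R_G$. Thus the family
\[
\{E_6\setminus\mathcal E_G\}\cup\bigl\{\{m_0\}:m_0\in\mathcal E_G\bigr\}
\]
is an open cover. It has no finite subcover, since each singleton covers only one of infinitely many exceptions.

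The main point requiring care is the first step: the fact that open sets are predecessor-closed, so that an open set containing $6$ swallows everything reaching $6$. This is immediate from the definition of $\tau_G$ by induction along a chain $m=x_0\mathcal R_G x_1\cdots\mathcal R_G x_k=6$, working backward from $x_k\in U$.
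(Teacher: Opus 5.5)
Your argument is correct and is essentially the paper's proof. For sufficiency, the member of the cover containing $6$ absorbs all of $E_6\setminus\mathcal E_G$ by predecessor-closure along the chain $m\,\mathcal R_G\,(p+3)\,\mathcal R_G\,6$ of Lemma~\ref{lem:goldbach-to-six}, and for necessity the open cover $\{E_6\setminus\mathcal E_G\}\cup\{\{c\}:c\in\mathcal E_G\}$ has no finite subcover. The one thing to fix is presentation: delete the retracted opening claim that an open set containing a non-exception must contain $6$, which is false and unused, and keep only the observation that open sets are closed under $\mathcal R_G$-predecessors.
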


\begin{proof}
Assume first that $\mathcal E_G$ is finite. Let $\mathscr U=\{U_i\}_{i\in I}$ be an arbitrary open cover of $E_6$. Since $6\in E_6$, there exists
$i_0\in I$ such that $6\in U_{i_0}$. We claim that $U_{i_0}$ contains every point of
$E_6\setminus\mathcal E_G$. Let $m\in E_6\setminus\mathcal E_G$. Then
\[
S(m)\neq\varnothing,
\]
and by Lemma~\ref{lem:goldbach-to-six}, $m\leadsto_{\mathcal R_G}6$. Since $U_{i_0}$ is open and contains $6$, the defining property
\[
\mathcal R_G^{*}(U_{i_0})\subseteq U_{i_0}
\]
can be applied repeatedly along a finite $\mathcal R_G$-chain from
$m$ to $6$. Hence $m\in U_{i_0}$. Therefore
\[
E_6\setminus\mathcal E_G
\subseteq U_{i_0}.
\]
Since $\mathcal E_G$ is finite, for each
$c\in\mathcal E_G$ choose some member $U_{i(c)}$ of the cover such
that $c\in U_{i(c)}$. Then
\[
U_{i_0}
\cup
\bigcup_{c\in\mathcal E_G}U_{i(c)}
\]
is a finite subcover of $E_6$. Thus $(E_6,\tau_G)$ is compact. Conversely, suppose that $\mathcal E_G$ is infinite. We first note
that, for every $c\in\mathcal E_G$, the singleton $\{c\}$ is open.
Indeed, if there existed $n\in E_6$ satisfying $n\mathcal R_G c$, then for some $p\in S(n)$ we would have $c=p+3$,
which would express $c$ as the sum of two primes. This contradicts
$c\in\mathcal E_G$. Hence
\[
\mathcal R_G^{*}(\{c\})=\varnothing,
\]
and therefore $\{c\}\in\tau_G$. Now define
\[
G=E_6\setminus\mathcal E_G.
\]
We claim that $G$ is open. Suppose that $y\in G$ and $x\mathcal R_Gy$. Then $x$ has an $\mathcal R_G$-successor. By the definition of the
Goldbach relation this implies $S(x)\neq\varnothing$, and hence $x\in G$. Therefore
\[
\mathcal R_G^{*}(G)\subseteq G,
\]
so that $G\in\tau_G$. Consequently,
\[
\mathscr V
=
\{G\}
\cup
\bigl\{\{c\}:c\in\mathcal E_G\bigr\}
\]
is an open cover of $E_6$. Since $\mathcal E_G$ is infinite and each
$c\in\mathcal E_G$ occurs only in its corresponding singleton in this
cover, no finite subfamily of $\mathscr V$ can cover all of
$\mathcal E_G$. Thus $\mathscr V$ has no finite subcover, and
$(E_6,\tau_G)$ is not compact.
\end{proof}

\bigskip

The previous characterization immediately yields a topological consequence
of the strong Goldbach conjecture. If no exceptions exist, then the
exception set is certainly finite, and the Goldbach relational space must
therefore be compact.

\begin{corollary}
\label{cor:goldbach-compact}
If the strong Goldbach conjecture is true, then
$(E_6,\tau_G)$ is compact.
\end{corollary}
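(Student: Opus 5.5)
The plan is to derive the corollary directly from Theorem~\ref{thm:goldbach-compact}, so no new topological argument should be needed. The only arithmetic input is that the strong Goldbach conjecture means every $m\in E_6$ admits a Goldbach representation, that is, $S(m)\neq\varnothing$ for all $m\in E_6$.

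First I will observe that, under the conjecture, the exception set satisfies
\[
\mathcal E_G=\{m\in E_6:S(m)=\varnothing\}=\varnothing .
\]
Next I will note that the empty set is finite. Theorem~\ref{thm:goldbach-compact} then gives compactness of $(E_6,\tau_G)$ immediately.

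As a sanity check, I may also describe the direct argument, which is the first half of the proof of Theorem~\ref{thm:goldbach-compact} with $\mathcal E_G=\varnothing$. Take any open cover and pick a member $U_{i_0}$ containing $6$. By Lemma~\ref{lem:goldbach-to-six}, every $m\in E_6$ satisfies $m\leadsto_{\mathcal R_G}6$. Since open sets are closed under $\mathcal R_G$-predecessors, $U_{i_0}$ contains $m$. Hence $U_{i_0}=E_6$, and this single set is already a finite subcover.

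An alternative route is Theorem~\ref{thm:main-goldbach}, which gives $6$ in every singleton closure. Then any open set containing $6$ meets, and hence contains, every point, so it equals $E_6$.

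I do not expect a genuine obstacle here. The only point needing care is recording that the conjecture is exactly the statement $\mathcal E_G=\varnothing$, with $4=2+2$ being irrelevant since $4\notin E_6$.
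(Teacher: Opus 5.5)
Your proposal is correct and matches the paper's proof: under the conjecture $\mathcal E_G=\varnothing$, which is finite, so Theorem~\ref{thm:goldbach-compact} gives compactness at once. Your direct sanity-check argument, that any open set containing $6$ is already all of $E_6$, is also valid.
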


\begin{proof}
If the Goldbach conjecture is true, then
\[
\mathcal E_G=\varnothing.
\]
Hence $\mathcal E_G$ is finite, and the conclusion follows from
Theorem~\ref{thm:goldbach-compact}.
\end{proof}

\bigskip

For the present relational space, compactness can be strengthened without
changing the underlying arithmetic condition. The finite decomposition
induced by the exception set allows compactness to be identified with
supercompactness, leading to the following refinement.

\begin{corollary}
\label{cor:goldbach-supercompact}
The following statements are equivalent:
\begin{enumerate}
    \item[\rm (i)] $(E_6,\tau_G)$ is compact;
    \item[\rm (ii)] $(E_6,\tau_G)$ is supercompact;
    \item[\rm (iii)] the set of Goldbach exceptions
    \[
    \mathcal E_G=\{m\in E_6:S(m)=\varnothing\}
    \]
    is finite.
\end{enumerate}
In particular, if the strong Goldbach conjecture is true, then
$(E_6,\tau_G)$ is supercompact.
\end{corollary}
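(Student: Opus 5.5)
The equivalence (i)$\Leftrightarrow$(iii) is exactly Theorem~\ref{thm:goldbach-compact}, so it remains to bring supercompactness into the cycle. I will use the standard definition: a space is supercompact if there is a subbase $\mathcal S$ such that every cover of the space by members of $\mathcal S$ has a subcover with at most two members. I plan to prove (ii)$\Rightarrow$(i) and (iii)$\Rightarrow$(ii). The first of these is the classical fact that supercompact spaces are compact, via the Alexander subbase lemma: every cover by subbasic sets already has a two-element, hence finite, subcover.

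For (iii)$\Rightarrow$(ii), the natural subbase in an Alexandroff space is the family of minimal open neighbourhoods. For each $x$, let $U_x$ denote the smallest open set containing $x$. Since $U$ is open exactly when it is closed under $\mathcal R_G$-predecessors, $U_x$ is the set of points $y$ with $y\leadsto_{\mathcal R_G}x$. These sets form a base $\mathcal B$. A cover of $E_6$ by members of $\mathcal B$ must contain $U_c$ for every $c\in\mathcal E_G$. By the argument in Theorem~\ref{thm:goldbach-compact}, $c$ has no predecessors other than possibly itself, so it lies only in $U_c=\{c\}$. If $\mathcal E_G$ has at least two elements, this already forces three sets in the cover once $6$ is also covered. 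So the minimal-neighbourhood subbase works only when $|\mathcal E_G|\le 1$, and I need a better subbase for the finite case.

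The fix is to choose a coarser subbase in which the finitely many exceptions are absorbed. Write $G=E_6\setminus\mathcal E_G$; it is open by the proof of Theorem~\ref{thm:goldbach-compact}, and $U_6=G$ by Lemma~\ref{lem:goldbach-to-six}. Every open set is a union of sets $U_x$, and each $U_x$ equals either $\{c\}$ for some exception $c$ or a subset of $G$. I then take
\[
\mathcal S=\bigl\{\,U\in\tau_G : 6\in U\,\bigr\}\cup\bigl\{\,E_6\setminus\{c\} : c\in\mathcal E_G\,\bigr\}\cup\bigl\{\,U_x : x\in G\,\bigr\}.
\]
Each set $E_6\setminus\{c\}$ is open, because $\{c\}$ is closed by Proposition~\ref{prop:failure-goldbach-isolated}. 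The family $\mathcal S$ still generates $\tau_G$: each exception singleton $\{c\}$ must be obtained as a finite intersection of members of $\mathcal S$. Since $\mathcal E_G$ is finite, $\{c\}=\bigcap_{c'\neq c}\bigl(E_6\setminus\{c'\}\bigr)\cap V$, where $V$ is an open set containing $\{c\}\cup\mathcal E_G$ but avoiding $G$. I will have to check that such a $V$ is available in $\mathcal S$, adjusting $\mathcal S$ by adding $\mathcal E_G$ itself, which is open because it is a union of the open singletons $\{c\}$.

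Now consider any cover of $E_6$ by members of $\mathcal S$. It either contains a set of the form $E_6\setminus\{c\}$, or it covers $6$ by some open $U\ni 6$, which then contains $G$. In the first case, $E_6\setminus\{c\}$ together with any member covering $c$ gives a two-element subcover. In the second case, $U\supseteq G$, and the exceptions must be covered by $\mathcal E_G$ or by sets from the first family. The main obstacle is this last case: if $|\mathcal E_G|\ge 2$ and the cover consists of several small open sets containing $6$, each catching one exception, two sets may not suffice. I will therefore tune $\mathcal S$ by restricting its first family to the sets $G\cup\mathcal E_G=E_6$ and $G$ only. Then the first family cannot contribute such small sets, and every cover contains either $E_6$ itself or some $E_6\setminus\{c\}$, which yields a subcover of size at most two. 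The subbase property then reduces to verifying that $G$, the sets $E_6\setminus\{c\}$, $\mathcal E_G$, and the sets $U_x$ with $x\in G$ generate $\tau_G$. This verification is the step I expect to require the most care. The final assertion of the corollary then follows from Corollary~\ref{cor:goldbach-compact}.
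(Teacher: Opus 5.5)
Your proof is correct, but for (iii)$\Rightarrow$(ii) it takes a different route from the paper.

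The paper writes $E_6$ as the finite topological sum of the clopen pieces $G=E_6\setminus\mathcal E_G$ and $\{c\}$, $c\in\mathcal E_G$. It notes that each piece is supercompact: every open cover of $G$ contains $G$ itself, because any open set containing $6$ contains all of $G$. It then quotes the fact that a finite sum of supercompact spaces is supercompact. You instead write down a binary subbase directly:
\[
\{E_6,\,G,\,\mathcal E_G\}\cup\{E_6\setminus\{c\}:c\in\mathcal E_G\}\cup\{U_x:x\in G\}.
\]
This is essentially what the proof of the sum lemma produces when the finite discrete part $\mathcal E_G$ is given the subbase of complements of singletons.

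What each route buys:
\begin{itemize}
\item Your route is self-contained and fixes the meaning of ``supercompact'' as van Mill's. That matters, because the paper never defines the term. Under the other common reading, ``every open cover contains the whole space'', the equivalence fails whenever $\mathcal E_G$ is finite but nonempty: the cover $\{G\}\cup\{\{c\}:c\in\mathcal E_G\}$ has no member equal to $E_6$.
\item The paper's route is shorter, but it rests on a closure property it does not prove.
\end{itemize}

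Two small corrections, neither of which affects validity.
\begin{itemize}
\item The generation step you flag as the delicate one is immediate. Each $U_x$ with $x\in G$ is already in your family, and
\[
\{c\}=\mathcal E_G\cap\bigcap_{c'\in\mathcal E_G\setminus\{c\}}\bigl(E_6\setminus\{c'\}\bigr)
\]
is a finite intersection because $\mathcal E_G$ is finite. So every minimal neighbourhood is a finite intersection of members, and the family generates $\tau_G$.
\item Your summary claim that every cover contains $E_6$ or some $E_6\setminus\{c\}$ is false once $|\mathcal E_G|\ge 2$; the cover $\{G,\mathcal E_G\}$ is a counterexample. In that remaining case, exceptions can be covered only by $\mathcal E_G$ (each $U_x$ with $x\in G$ lies in $G$). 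The point $6$ can be covered only by $G=U_6$, since $6\in U_x$ forces $x=6$. Hence such a cover contains both $G$ and $\mathcal E_G$, which already form a two-element subcover, and your conclusion stands.
\end{itemize}
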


\begin{proof}
By Theorem~\ref{thm:goldbach-compact},
$(E_6,\tau_G)$ is compact, if and only if $|\mathcal E_G|<\infty$. If $\mathcal E_G$ is finite, then
\[
E_6=
\bigl(E_6\setminus\mathcal E_G\bigr)
\sqcup
\bigcup_{c\in\mathcal E_G}\{c\}.
\]
The subspace $E_6\setminus\mathcal E_G$ is supercompact, since every
nonempty closed subset contains $6$, and each singleton $\{c\}$ is
supercompact. Since a finite topological sum of supercompact spaces is
supercompact, $(E_6,\tau_G)$ is supercompact. Conversely, every supercompact space is compact. 
\end{proof}

\section{Conclusions and future directions}

In this paper we introduced a relational Alexandroff topology
\(\tau_G\) on $E_6=\{6,8,10,\ldots\}$ associated with Goldbach representations. The construction is based on
the multivalued map
\[
F_G(m)=\{p+3:p\in S(m)\},
\]
where
\[
S(m)=\{p\in\mathbb P:m-p\in\mathbb P\}.
\]
The corresponding relational structure provides a natural
reachability interpretation of singleton closures and singles out the
point \(6\) as a global terminal point whenever a Goldbach
representation exists. The main result establishes that the strong Goldbach conjecture is
equivalent to
\[
\bigcap_{m\in E_6}\overline{\{m\}}=\{6\},
\]
or, equivalently, to the fact that the Goldbach relational space
\((E_6,\tau_G)\) is not \(wR_0\). We have also shown that the
conjecture is equivalent to connectedness of the Goldbach space,
whereas compactness is equivalent to the existence of at most finitely
many Goldbach exceptions. Moreover, for this particular relational
space, compactness and supercompactness coincide.

These results suggest several natural directions for further research.
The first concerns an algebraic reformulation of the Goldbach
conjecture through the semiring of open sets. Any topology
\(\tau\) can be regarded as a commutative idempotent semiring by taking
union as addition and intersection as multiplication. Guale and
Vielma \cite{GualeVielma2026} showed that a topological space is not
\(wR_0\) if and only if there exists a nonempty closed set \(F\)
whose avoidance ideal
\[
\Phi(F)=\{U\in\tau:U\cap F=\varnothing\}
\]
is the unique maximal ideal of the topology semiring. They applied
this principle to the primal Collatz space and obtained an algebraic
characterization of the Collatz conjecture in terms of local
semirings. In the present setting, the main characterization theorem identifies
the common closed core explicitly as
\[
K_{\mathcal R_G}(E_6)=\{6\}.
\]
Consequently, the results of \cite{GualeVielma2026} strongly suggest
the following algebraic formulation:
\[
\text{the strong Goldbach conjecture holds}
\]
if and only if the topology semiring \(\tau_G\) has a unique maximal
ideal, namely the avoidance ideal
\[
\Phi(\{6\})
=
\{U\in\tau_G:6\notin U\}.
\]
Thus the topological characterization developed here opens a direct
route toward a purely algebraic characterization of Goldbach in terms
of local semirings. A systematic study of the ideal structure of
\(\tau_G\), including its maximal, prime, and principal ideals, appears
to be a natural continuation of the present work.

A second direction concerns the prime spectrum of the Goldbach
topology semiring. Ruza and Vielma \cite{RuzaVielma2009} studied
Gelfand semirings and the Zariski topology on their prime spectra.
Recall that a commutative semiring \(R\) is called a Gelfand semiring
when every prime ideal is contained in a unique maximal ideal. They
proved, among other results, that for a Gelfand semiring \(R\),
locality of \(R\) is equivalent to connectedness of its prime spectrum
equipped with the appropriate Alexandroff--Zariski topology.

This suggests investigating whether the Goldbach topology semiring
\(\tau_G\) belongs to the class of Gelfand semirings. If this is the
case, the locality characterization arising from the \(wR_0\)
property could potentially be transferred to the spectrum
\[
\operatorname{Spec}(\tau_G),
\]
leading to a spectral characterization of the Goldbach conjecture in
terms of connectedness of a Zariski-type space.

The supercompactness obtained for the Goldbach relational space also
suggests a further connection with the theory developed in
\cite{RuzaVielma2009}. In that work, supercompactness properties of
topologies on the prime spectrum are related to uniqueness of minimal
prime ideals. It would therefore be interesting to determine whether
the supercompactness of \((E_6,\tau_G)\) can be transferred, through
the topology semiring, to an appropriate topology on
\(\operatorname{Spec}(\tau_G)\). Such a result could provide a second
algebraic--topological characterization of the Goldbach conjecture,
now involving Gelfand or \(m\)-semiring structures and the geometry of
the prime spectrum.

However, this spectral program requires additional work. In
particular, one must determine whether \(\tau_G\) is a Gelfand
semiring, describe its prime and maximal ideals, and establish the
precise relation between supercompactness of the original Goldbach
space and the relevant Zariski or Alexandroff topology on its spectrum.
These questions appear to form a natural next stage of the theory. The relational construction considered here suggests
that arithmetic conjectures admitting a global reachability
formulation may also admit associated Alexandroff spaces, semiring
structures, and spectral representations. Exploring this interaction
between number theory, general topology, and semiring theory may
provide a useful framework for further topological and algebraic
reformulations of classical arithmetic problems.

\end{document}